\documentclass{amsart}
\usepackage[foot]{amsaddr}
\usepackage{graphicx} 
\usepackage[paper=a4paper, top=2.2cm, 
	bottom=1.6cm, 
	left=2.5cm, 
	right=2.5cm, 
	headheight=0.75cm, 
	footskip=1cm, 
	headsep=0.5cm]{geometry}

\usepackage[hidelinks]{hyperref}

\usepackage[utf8]{inputenc}
\usepackage[
  backend=biber,
  style=numeric,
  sorting=none,      
  giveninits=true,   
  doi=true,          
  url=false,
  isbn=false,
  eprint=false
]{biblatex}

\AtEveryBibitem{%
  \clearfield{issn}%
  \clearfield{eissn}%
}

\renewbibmacro*{doi+eprint+url}{}

\DeclareFieldFormat{doi}{\href{https://doi.org/#1}{#1}}
\renewbibmacro*{finentry}{%
  \iffieldundef{doi}
    {}
    {\setunit{\adddot\space}%
     \printfield{doi}}%
  \finentry
}

\usepackage{verbatim}
\usepackage{xcolor}
\usepackage{float}
\usepackage{amsmath}
\usepackage{amssymb}
\usepackage{mathtools}
\usepackage{esint}
\usepackage{varioref}
\usepackage{amsthm}
\usepackage{comment}
\usepackage{enumitem}
\usepackage{ifthen}
\usepackage[capitalize,noabbrev]{cleveref}

\usepackage{cancel}

\usepackage[toc,page]{appendix}

\usepackage{mathrsfs}

\newtheorem{theorem}{Theorem}[section]

\newtheorem{lemma}[theorem]{Lemma}
\newtheorem{proposition}[theorem]{Proposition}
\newtheorem{assumption}[theorem]{Assumption}
\theoremstyle{definition}
\newtheorem{definition}[theorem]{Definition}

\newtheorem{remark}[theorem]{Remark}

\renewcommand{\d}{\mathrm{d}}
\renewcommand{\H}{\mathrm{H}}
\renewcommand{\L}{\mathrm{L}}

\newcommand{\opt}{\mathrm{opt}}

\newcommand{\re}{\mathrm{Re}\,}

\newcommand{\Lp}[1][p]{\L^{#1}}

\newcommand{\R}{\mathbb{R}}
\newcommand{\C}{\mathbb{C}}

\newcommand{\N}{\mathbb{N}}

\newcommand{\citel}[2]{\cite[#2]{#1}}
\newcommand{\eq}[1]{\begin{align*}#1 \end{align*}}

\newcommand{\pmatsmall}[1]{\begin{bsmallmatrix}#1 \end{bsmallmatrix}}

\newcommand*{\Iprod}[3][default]{\ifthenelse{\equal{#1}{default}}{\left\langle#2,#3\right\rangle}{\ldelim{#1}{\langle}#2,#3\rdelim{#1}{\rangle}}}

\usepackage{pgfplots}
\pgfplotsset{compat=newest}
\usepackage{graphicx}
\usepackage{tikz}
\usepackage{overpic}
\usepackage{stanli}
\usetikzlibrary{arrows.meta,positioning,intersections,shapes,arrows}

\usepackage{amsmath,amssymb,amsfonts,mathtools,bm}

\usepackage[inline]{asymptote}

\usepackage{times}
\usepackage{xcolor}
\usepackage{enumitem}

\DeclarePairedDelimiterX{\setdef}[2]{\{}{\}}{#1\,\delimsize\vert\,\mathopen{} #2}
\DeclarePairedDelimiterX{\scprod}[2]{\langle}{\rangle}{#1,#2}
\DeclarePairedDelimiterX{\dualprod}[2]{\langle}{\rangle}{#1,#2}
\DeclarePairedDelimiterX{\sdprod}[2]{\llangle}{\rrangle}{#1,#2} 
\newcommand{\dd}{\mathrm{d}} \newcommand{\dx}[1][x]{\mathop{\dd#1}}

\newcommand{\Lb}{\mathcal{L}_{\mathrm{b}}} 

\newcommand{\adjunsymb}{\ast} 

\newcommand{\adjun}[1][1]{%
  \setcounter{i}{1}%
  \striche={\adjunsymb}%
  \loop%
  \ifnum\value{i}<#1%
  \striche=\expandafter{\the\expandafter\striche\adjunsymb}%
  \stepcounter{i}%
  \repeat%
  ^{\the\striche}%
}
\newcommand{\X}{\mathcal{X}}
\newcommand{\U}{\mathcal{U}}
\newcommand{\Y}{\mathcal{Y}}
\newcommand{\W}{\mathcal{W}}

\renewcommand{\d}{\mathrm{d}}

\renewcommand{\H}{\mathrm{H}}
\renewcommand{\L}{\mathrm{L}}

\newcommand{\dom}{\operatorname{dom}}

\newcommand{\pvek}[2]
{
   \begin{pmatrix}
      #1\\
      #2
   \end{pmatrix}
}
\newcommand{\sbvek}[2]{\left[\begin{smallmatrix}#1\\#2\end{smallmatrix}\right]}
\newcommand{\spvek}[2]{\left(\begin{smallmatrix}#1\\#2\end{smallmatrix}\right)}

\usepackage{graphicx}          
\providecommand{\keywords}[1]
{
  \small	
  \textbf{\textit{Keywords---}} #1
}

\title{Optimal control of infinite-dimensional dissipative systems}

\author[A.\ Hastir]{Anthony Hastir}
\address[A.\ Hastir]{Department of Mathematics and Namur Institute for Complex Systems (naXys), University of Namur, Rue de Bruxelles 61, 5000 Namur, Belgium}
\email{anthony.hastir@unamur.be}

\author[T.\ Reis]{Timo Reis}
\address[T.\ Reis]{Institut für Mathematik, Technische Universität Ilmenau, Weimarer Straße 25, 98693 Ilmenau, Germany}
\email{timo.reis@tu-ilmenau.de}

\begin{document}

\subjclass[2020]{%
49N05,
93B28,
37L05,
93B52 
}

\keywords{Optimal control, Infinite-dimensional systems, Dissipative systems, System nodes}

\begin{abstract}
We study the linear-quadratic optimal control problem for infinite-dimensional dissipative systems with possibly indefinite cost functional. Under the assumption that a storage function exists, we show that this indefinite optimal control problem is equivalent to a linear–quadratic optimal control problem with a nonnegative cost functional. We establish the relationship between the corresponding value functions and present the associated operator Lur'e equation. Finally, we illustrate our results with several examples.
\end{abstract}

\maketitle
\section{Introduction}
We consider an optimal control problem that consists in minimizing the infinite-horizon cost functional
\begin{equation}
J(u(\cdot),y(\cdot)) = \int_0^\infty
s(y(\tau),u(\tau))
\d\tau,\qquad
s(y(\tau),u(\tau)):=\left\langle
\spvek{y(\tau)}{u(\tau)},
\pmatsmall{Q & S\\[1mm] S^* & R}\spvek{y(\tau)}{u(\tau)}
\right\rangle_{\Y\times\U},
\label{eq:CostIntro}
\end{equation}
subject to a linear input–state–output system (to be specified later) with input $u(t)\in\U$, output $y(t)\in\Y$, and state $x(t)\in\X$ satisfying $x(0)=x_0\in\X$ and $\lim_{t\to\infty}x(t)=0$. Here, $\U$, $\Y$ and $\X$ are Hilbert spaces. In \eqref{eq:CostIntro}, the operators $Q:\Y\to\Y$ and $R:\U\to\U$ are bounded and self-adjoint, while $S:\U\to\Y$ is bounded.

Even though linear-quadratic optimal control problems admit a powerful treatment via methods from convex analysis \cite{EkelandTemam1999}, the situation becomes substantially more delicate in the infinite-dimensional setting. In particular, three structural features may significantly complicate the analysis:
\begin{enumerate}[label=(\roman*),ref=(\roman*)]
\item unbounded input and output operators,
\item singular cost functionals,
\item indefinite cost functionals.
\end{enumerate}

\medskip
\noindent
\textbf{Unbounded input and output operators.}
Unbounded control or observation operators typically arise in boundary control and observation of partial differential equations. In this case, one must work with suitable extrapolation spaces of the state space, and the characterization of linear-quadratic optimal control becomes considerably more involved than in the case of bounded (i.e., distributed) control and observation operators. This setting was investigated for weakly regular systems in \cite{WeissWeiss}, where associated operator equations (which generalize algebraic Riccati equations) were introduced. Subsequently, well-posed linear systems were treated in \cite{Staffans_LQ_1998}. More recently, an alternative approach based on system nodes has been developed; see \cite{Opmeer_Staffans,Opmeer2014}. There, an operator Lur'e equation (which is therein referred to as the \emph{operator-node Riccati equation}) is formulated, whose main advantage is that it avoids any extension of unbounded operators. By contrast, for bounded input and output operators the infinite-dimensional linear-quadratic optimal control problem is, at least conceptually, not significantly more difficult than its finite-dimensional counterpart; see, for instance, \cite{CurtainPritchard, CurtainZwart2020, CallierWinkin1990} and the classical finite-dimensional reference \cite{Kwakernaak_Sivan}.

\medskip
\noindent
\textbf{Singular problems.}
Singularity refers to the situation in which the cost functional $J(u(\cdot),0)$ fails to be coercive over the set of all admissible system trajectories. This typically, though not exclusively, arises when the operator $R$ is singular. In such cases, minimizing sequences of controls may exist, while an actual minimizer need not; even if it does, it may fail to be unique. This phenomenon already occurs in finite-dimensional settings \cite{geerts1994linearquadratic}, where one is naturally led to consider Lur’e equations, that is, matrix inequalities generalizing the algebraic Riccati equation.
Singular linear–quadratic problems in infinite-dimensional spaces were subsequently studied in \cite{Curtain2001LOI}, where an infinite-dimensional analogue of Lur’e equations was developed.

\medskip
\noindent
\textbf{Indefinite problems.}
By indefinite problems we mean that the block operator $\pmatsmall{Q & S\\ S^* & R}$ defining the cost functional is not positive semidefinite. 
In general, the infimum of the cost functional is then equal to $-\infty$. However, for the supply rate corresponding to $Q=R=0$ and $S=I$, passivity guarantees that the cost functional is bounded from below and thus has a finite infimum. Nevertheless, indefinite optimal control problems are intrinsically more challenging, as they no longer correspond to norm minimization problems. Despite these difficulties, they are of considerable practical relevance, since they naturally arise, for example, in energy minimization problems; see \cite{Faulwasser2022PHDAE}.

In this manuscript, we address infinite-horizon linear-quadratic optimal control problems in which unbounded input and output operators, singular costs, and indefiniteness may occur simultaneously. Our main contribution is a transformation technique that reduces an indefinite problem to an equivalent linear-quadratic optimal control problem with a nonnegative cost functional. In this setting, the functions $u$ and $y$ in \eqref{eq:CostIntro} denote, respectively, the input and output of a system of the form
\begin{equation}
    \spvek{\dot{x}(t)}{y(t)} = \pmatsmall{A\& B\\[1mm] C\& D}\spvek{x(t)}{u(t)},
    \label{eq:SysNode}
\end{equation}
where $\pmatsmall{A\& B\\ C\& D
}$ is a~so-called {\em system node} on the triple $(\U,\X,\Y)$ of Hilbert spaces. A~precise definition of this system class is recalled in Section \ref{sec:SysNode}. The later formalism may be surprising at first glance, notably because of the symbols ``$\&$" in \eqref{eq:SysNode}, but it provides an elegant way to describe systems with both boundary and distributed control in a single approach, see \cite{Sta05book}. Roughly speaking, $\pmatsmall{A\& B\\ C\& D}$ is the natural extension of the ``$ABCD$-notation'' for finite-dimensional linear systems. The symbol ``$\&$" stresses out that the domain of $A\& B$ and $C\& D$ is not necessarily a Cartesian product of subspaces of $\X$ and $\U$. 

In our developments, we assume that \eqref{eq:SysNode} is dissipative with respect to the supply rate $s$, see \eqref{eq:CostIntro}, and the storage function $\mathcal{S}:\X\supset\dom(\mathcal{S})\to\R$ in the sense of \cite[Def.~3.1]{Reis2025_Dissip}, which roughly speaking extends the notion of dissipativity introduced in the seminal works of Willems, see~\cite{Willems_Part1, Willems_Part2}. This concept of dissipativity is rather general and it incorporates for instance the notion of (impedance or scattering) passive systems whose properties are studied extensively in \cite{Sta02}. 

An important aspect of this manuscript is that we relax the nonnegativity assumption on the cost functional \eqref{eq:CostIntro}, which is usually required in the standard treatment of linear-quadratic optimal control problems. This relaxation leads to the consideration of indefinite cost functionals. As will be shown, our approach is based on transforming the resulting indefinite optimal control problem into an linear-quadratic optimal control problem with a semidefinite cost functional.

In this way, our work generalizes earlier results on optimal control for dissipative systems, such as those presented in \cite{HasJac_25} and \cite{ReisSchaller2025LQOC}. In \cite{HasJac_25}, linear-quadratic optimal control for impedance- or scattering-passive systems is studied in the framework of system nodes, under the structural assumption that the weighting operators in \eqref{eq:CostIntro} satisfy $Q = I$ and $S = 0$. In \cite{ReisSchaller2025LQOC}, indefinite cost functionals are considered in the context of port-Hamiltonian systems; however, the analysis is restricted to a finite time horizon and additionally includes a terminal-state penalization.

The manuscript is organized as follows: the concept of system nodes is introduced in Section~\ref{sec:SysNode}. In Section~\ref{sec:OptControl}, we show how to convert our indefinite optimal control problem into a definite one. Moreover, we show the relation between the different value functions and we give the appropriate underlying Lur'e equation. We apply our results to several examples in Section~\ref{sec:Examples}.

\section{System nodes}\label{sec:SysNode}
In this section, we present some background on system nodes that we will need in the remaining of this paper. 
Throughout this work, $\Lb(\X,\Y)$ denotes the set of linear and bounded operators from $\X$ to $\Y$. The abbreviation $\Lb(\X)$ is used to denote $\Lb(\X,\X)$.
We start with the definition of a system node.
\begin{definition}[\textup{\citel{Sta05book}{Def.~4.7.2}}]
\label{def:SysNode} 
Let $\X$, $\U$, and $\Y$ be Hilbert spaces.
A closed operator 
\eq{
S:=\pmatsmall{A\&B\\[1mm] C\&D} : \dom(S) \subset \X\times \U \to \X\times \Y
}
is called a \emph{system node} on the spaces $(\U,\X,\Y)$ if it has the following properties.
\begin{itemize}
\item The operator $A: \dom(A)\subset \X\to \X$ with $Ax =A\&B \spvek{x}{0}$ for $x\in \dom(A)=\setdef{x\in \X}{\spvek{x}0\in \dom(S)}$ generates a strongly continuous semigroup on $\X$.
\item For all $u\in\U$, there exists some $x\in\X$, such that $\spvek{x}{u}\in\dom(A\&B)$.
\item $\dom(S)=\dom(A\&B)$.
\end{itemize}
\end{definition}
The space $\X_{-1}$ is the completion of $\X$ with respect to the norm
$\|x\|_{-1} := \|(\beta-A)^{-1}x\|_{\X}$ for some $\beta\in\rho(A)$, where $\rho(A)$ denotes the resolvent set of $A$.
The operator $A\& B$ extends uniquely to a bounded operator
$[A_{-1},\,B]\in \Lb(\X\times\U,\X_{-1})$.
Moreover, the conditions in Definition~\ref{def:SysNode} ensure that $C\& D\in \Lb(\dom(S),\Y)$, where $\dom(S)$ is endowed with the graph norm of $S$.
The output operator $C\in \Lb(\dom(A),\Y)$ associated with $S$ is then defined by
$Cx := C\& D \spvek{x}{0}$ for all $x\in \dom(A)$.
We now introduce the notions of classical and generalized trajectories for system nodes; see \citel{TucWei14}{Def.~4.2}.
\begin{definition}\label{def:classicalSol_SN}
Let $S$ be a system node on the spaces $(\U,\X,\Y)$ with domain $\dom(S)$. A triple $(x(\cdot),u(\cdot),y(\cdot))$ is called a {\em classical trajectory} of \eqref{eq:SysNode} on $\R_{\ge0}$ if 
\begin{itemize}
    \item $x(\cdot)\in C^1(\R_{\geq 0};\X)$, $u(\cdot)\in C(\R_{\geq 0};\U)$ and $y(\cdot)\in C(\R_{\geq 0};\Y)$;
    \item $\spvek{x(t)}{u(t)}\in \dom(S)$ for all $t\geq 0$;
    \item \eqref{eq:SysNode} holds for all $t\geq 0$.
\end{itemize}
\end{definition}
According to \citel{TucWei14}{Sec.~4}, it follows from Definition \ref{def:classicalSol_SN} that every classical trajectory of \eqref{eq:SysNode} on $\R_{\geq 0}$ also satisfies $\spvek{x(\cdot)}{u(\cdot)}\in C(\R_{\geq 0};\dom(S))$.
\begin{definition}\label{def:genSol_SN}
Let $S$ be a system node on the spaces $(\U,\X,\Y)$ with domain $\dom(S)$. A triple $(x(\cdot),u(\cdot),y(\cdot))$ is called a generalized trajectory of \eqref{eq:SysNode} on  $\R_{\geq 0}$ if
\begin{itemize}
    \item $x(\cdot)\in C(\R_{\geq 0};\X)$, $u(\cdot)\in\L^2_{\mathrm{loc}}(\R_{\geq 0};\U)$, $y(\cdot)\in\L^2_{\mathrm{loc}}(\R_{\geq 0};\Y)$;
    \item there exists a sequence $(x_k(\cdot),u_k(\cdot),y_k(\cdot))$ of classical trajectories of \eqref{eq:SysNode} such that $x_k(\cdot)\to x(\cdot)$ in $C(\R_{\geq 0};\X)$, $u_k(\cdot)\to u(\cdot)$ in $\L^2_{\mathrm{loc}}(\R_{\geq 0};\U)$ and $y_k(\cdot)\to y(\cdot)$ in $\L^2_{\mathrm{loc}}(\R_{\geq 0};\Y)$.
\end{itemize}
By $u_k(\cdot) \to u(\cdot)$ in $\L^2_{\mathrm{loc}}(\R_{\ge 0};\U)$ we mean that $u_k|_K(\cdot) \to u|_K(\cdot)$ in $\L^2(K;\U)$ for every compact $K\subset\R_{\ge 0}$.
\end{definition}

Another possible additional property is \emph{well-posedness}. According to \cite[Def.~2.6]{Sta02}, a system node is well-posed if for all $t>0$, there exists a finite constant $c_t > 0$ such that all generalized trajectories $(x(\cdot),u(\cdot),y(\cdot))$ of $S$ satisfy
\begin{align*}
\Vert x(t)\Vert^2_\X + \Vert y(\cdot)\Vert_{\L^2([0,t];\Y)}^2 \leq c_t\left(\Vert x(0)\Vert_\X^2 + \Vert u(\cdot)\Vert_{\L^2([0,t];\U)}^2\right).
\end{align*}
If this constant $c_t$ can be chosen independent on $t$, then the system is callen {\em infinite-time well-posed}.

Note that well-posedness implies that for all $x_0\in\X$, $u(\cdot)\in\L^2_{\mathrm{loc}}(\R_{\geq 0};\U)$, there exist 
$x(\cdot)\in \L^2_{\mathrm{loc}}(\R_{\geq 0};\X)$, $y(\cdot)\in \L^2_{\mathrm{loc}}(\R_{\geq 0};\Y)$, such that $(x(\cdot),u(\cdot),y(\cdot))$ is called a generalized trajectory of \eqref{eq:SysNode} on  $\R_{\geq 0}$.

\section{Optimal control problem}\label{sec:OptControl}

Let $S:=\pmatsmall{A\& B\\ C\& D}$ be a system node on the spaces $(\U,\X,\Y)$ where $\U, \X$ and $\Y$ are Hilbert spaces. We consider infinite-dimensional systems of the form \eqref{eq:SysNode} and $x_0\in\X$,
we aim to minimize the values of the (possibly indefinite) cost functional $J(u(\cdot),y(\cdot))$ as in \eqref{eq:CostIntro}
 over $u(\cdot)\in\Lp[2](\mathbb{R}_{\geq 0};\U)$ such that $x(\cdot)$ and $y(\cdot)$ in \eqref{eq:SysNode} satisfy $x(0)=x_0$ and  $y(\cdot)\in\Lp[2](\R_{\geq 0};\Y)$, 
where $Q\in\Lb(\Y)$, $R\in\Lb(\U)$, $S\in\Lb(\U,\Y)$ with $Q=Q^*$ and $S=S^*$. We emphasize that no sign constraint is required on $s$, allowing the treatment of indefinite cost functionals. We start with the following assumption on \eqref{eq:SysNode}. For the notion of (bounded) quadratic form, we refer to \cite[Chap.~6]{kato1995perturbation}.
\begin{assumption}\label{assum:dissip}
Let $\mathcal{S}:\X\to\mathbb{R}$ be a bounded real quadratic form, and let $Q=Q^*\in\Lb(\Y)$, $S\in\Lb(\U,\Y)$, $R=R^*\in\Lb(\U)$. We suppose that \eqref{eq:SysNode} is dissipative with respect to the supply rate functional $s$ and storage function $\mathcal{S}$ in the sense of \cite[Def.~3.1]{Reis2025_Dissip}. In other words, for any $t\geq 0$ and any classical trajectory $(x(\cdot),u(\cdot),y(\cdot))$ of \eqref{eq:SysNode} on $[0,t]$, it holds that 
\begin{equation}
\mathcal{S}(x(t)) - \mathcal{S}(x(0)) + \int_0^t s(u(\tau),y(\tau))\d\tau\geq 0.
\label{eq:Dissip} 
\end{equation}
\end{assumption}
Note that in \cite{Reis2025_Dissip} unbounded storage functions are also considered. Here, however, we restrict ourselves to the simpler case of bounded storage functions. According to \cite[Thm.~4.1]{Reis2025_Dissip}, the inequality \eqref{eq:Dissip} yields the existence of a Hilbert space $\W$ and 
some $K\&L\in \Lb(\dom(A\&B),\W)$ with dense range, 
such that any classical trajectory $(x(\cdot),u(\cdot),y(\cdot))$ of \eqref{eq:SysNode} satisfies, for any $t\geq 0$,
\begin{equation}
\mathcal{S}(x(t)) - \mathcal{S}(x(0)) + \int_0^t s(u(\tau),y(\tau))\d\tau = \int_0^t\left\| K\&L\spvek{x(\tau)}{u(\tau)}\right\|_\W^2\d\tau.
\label{eq:DissipEq_DissipOutput}
\end{equation}
The function $w(\cdot):=K\&L\spvek{x(\cdot)}{u(\cdot)}:[0,t]\to \W$ is called {\em dissipation output}, and 
we may consider the extended system
\begin{equation}\label{eq:ODEnode_Extended}
  \spvek{\dot{x}(t)}{y(t)\\ w(t)}
  \;=\;
  \pmatsmall{A\& B\\[1mm] C\& D\\[1mm] K\& L}\spvek{x(t)}{u(t)}.
\end{equation}
Since $K\&L\in \Lb(\dom(A\&B),\W)$, the above system is also governed by a~system node.

Moreover, according to \cite[Thm.~4.1]{Reis2025_Dissip}, \eqref{eq:DissipEq_DissipOutput} is equivalent to the following operator-node Lur'e equation 
\begin{equation}
    2\re\langle Px_0,A\& B\left(\begin{smallmatrix}
        x_0\\ u_0
    \end{smallmatrix}\right)\rangle_\X + s(C\& D\left(\begin{smallmatrix}
        x_0\\ u_0
    \end{smallmatrix}\right),u_0) = \Vert K\& L\left(\begin{smallmatrix}
        x_0\\ u_0
    \end{smallmatrix}\right)\Vert_\W^2,
    \label{eq:RiccatiDissip}
\end{equation}
for all $\left(\begin{smallmatrix}
        x_0\\ u_0
    \end{smallmatrix}\right)\in\dom(A\& B)$. Hereby, $P\in\Lb(\X)$ is the operator that is associated to $\mathcal{S}$ in the sense that $\mathcal{S}(x_0) = \langle x_0,Px_0\rangle_\X$ for all $x_0\in\X$.
    
    \noindent Now we introduce the concept of an admissible state feedback for a system node, see~\cite[Def.~7.3.1]{Sta05book}. 
    \begin{definition}
    Let $S := \pmatsmall{A\& B\\C\& D}$ be a system node on the spaces $(\U,\X,\Y)$. An admissible state feedback for $S$ is an operator $F\& G\in\Lb(\dom(A\& B);\U)$ such that
    \begin{equation}
        \spvek{\dot{x}(t)\\y(t)}{z(t)} = \left[\begin{smallmatrix}A\& B\\[1mm]C\& D\\[1mm] F\& G\end{smallmatrix}\right]\spvek{x(t)}{u(t)}
        \label{eq:SysNodeExtraOut}
    \end{equation}
    in closed-loop with the feedback $u(\cdot) = z(\cdot) + v(\cdot)$ yields a system that is described by a system node with new input $v(\cdot)$ and output $\spvek{y(\cdot)}{z(\cdot)}$.
    \end{definition}
    Conditions guaranteeing admissibility of a state feedback can be found in \cite[Sec.~7.3]{Sta05book}. More precisely, admissibility of $F\& G$ is related to the invertibility of
    \begin{equation*}
        M := \pmatsmall{I_\X & 0\\[1mm]0 & I_\U} - \pmatsmall{0\\[1mm]F\& G}
        \label{eq:OpM}
    \end{equation*}
    viewed as a mapping from $\dom(A\& B)$ to $\dom(A_K\&B_K)$, where 
    \begin{equation}
    S_K = \pmatsmall{A_K\& B_K\\[1mm]C_K\& D_K\\[1mm] F_K\& G_K} := \pmatsmall{A\& B\\[1mm]C\& D\\[1mm] F\& G}M^{-1}
        \label{eq:SysNode_K}
    \end{equation}
    is a system node, see~\cite[Lem.~7.3.3]{Sta05book}. Moreover, the trajectories of 
    \begin{equation}
        \spvek{\dot{x}_K(t)\\y_K(t)}{z_K(t)} = S_K\spvek{x_K(t)}{v(t)}
        \label{eq:SysK}
    \end{equation}
    are related to those of \eqref{eq:SysNodeExtraOut} via the relation $u(\cdot) = z(\cdot) + v(\cdot)$, which in particular means that their state trajectories and their outputs coincide. The notion of admissible state feedback allows us to introduce the concept of \emph{state-feedback stabilizability} in the sense of~\cite[Def.~5.1]{Reis2025_Dissip}.
    \begin{definition}\label{def:StateFeedStab}
        Let $S := \pmatsmall{A\& B\\ C\& D}$ be a system node on the spaces $(\U,\X,\Y)$. $S$ is said to be \emph{state-feedback stabilizable} if there exists an admissible state feedback $F\& G\in\Lb(\dom(A\& B),\U)$ such that \eqref{eq:SysNode_K} with $u(\cdot) = z(\cdot) + v(\cdot)$ is infinite-time well-posed and the associated semigroup is strongly stable. 
    \end{definition}

    From now on, we make the following assumption on \eqref{eq:SysNode}.

    \begin{assumption}\label{assum:StateFeedbackStab}
        The system \eqref{eq:SysNode} is state-feedback stabilizable in the sense of Definition \ref{def:StateFeedStab}.
    \end{assumption}



The following results guarantee that, under Assumptions~\ref{assum:dissip} and \ref{assum:StateFeedbackStab}, the 
{\em value function}
    \begin{equation}
        \mathrm{Val}_J(x_0) = \inf\setdef*{J(u(\cdot),y(\cdot))}{\begin{aligned}& (x(\cdot),u(\cdot),y(\cdot))\in C(\R_{\ge0};\X)\times\L^2(\R_{\ge0};\U)\times\L^2(\R_{\ge0};\Y)\text{ is a }\\&
        \text{ generalized trajectory of \eqref{eq:SysNode} on $\R_{\geq 0}$ with $x(0) = x_0$ and $\lim_{t\to\infty}x(t)=0$}\end{aligned}}\label{eq:ValJ}
    \end{equation}
is a~bounded storage function. This in particular implies that $\mathrm{Val}_J(x_0)>-\infty$ for all $x_0\in \X$.
We will use the dissipation output $w(\cdot):=K\&L\spvek{x(\cdot)}{u(\cdot)}:[0,t]\to \W$. It follows immediately from the dissipation inequality that $w(\cdot)\in \L^2(\R_{\ge0};\W)$ whenever $(x(\cdot),u(\cdot),y(\cdot))$ is a~generalized trajectory of 
\eqref{eq:SysNode} on $\R_{\ge0}$ with bounded state trajectory $x(\cdot)$. We also consider the minimization of the $\L^2$-norm of the dissipation output. This leads to the consideration of the nonnegative cost functional
\begin{equation*}
    J_w(u(\cdot),w(\cdot)) := \int_0^\infty\Vert w(\tau)\Vert_\W^2\d\tau
    \label{eq:NewCost}
\end{equation*}
and associated value function
    \begin{equation}
        \mathrm{Val}_{J_w}(x_0) = \inf\setdef*{\|w(\cdot)\|_{\L^2}^2}{\begin{aligned}& \Big(x(\cdot),u(\cdot),\spvek{y(\cdot)}{w(\cdot)}\Big)\in C(\R_{\ge0};\X)\times\L^2(\R_{\ge0};\U)\times\L^2(\R_{\ge0};\Y\times\W) \text{ is }\\&
        \text{ a generalized trajectory of \eqref{eq:ODEnode_Extended} on $\R_{\geq 0}$ with $x(0) = x_0$ and $\lim_{t\to\infty}x(t)=0$}\end{aligned}}.\label{eq:ValJe}
    \end{equation}
Next we show a~relation between these two optimal control problems.
\begin{theorem}\label{thm:ReExprCost}
Consider a system of the form \eqref{eq:SysNode} where $S := \pmatsmall{A\& B\\ C\& D}$ is a system node on the spaces $(\U,\X,\Y)$ and suppose furthermore that $S$ satisfies Assumptions \ref{assum:dissip} and \ref{assum:StateFeedbackStab}. Further, let
$K\&L\in \Lb(\dom(A\&B),\W)$, $P=P^*\in\Lb(\X)$, such that \eqref{eq:RiccatiDissip} holds. 
Then, for all $x_0\in\X$, there exists $u(\cdot)\in\L^2(\R_{\geq 0};\U)$ such that, for some  $y(\cdot)\in\L^2(\R_{\geq 0};\Y)$ and $x(\cdot)\in C(\R_{\geq 0};\X)$ with $\lim_{t\to\infty}x(t)=0$, $(x(\cdot),u(\cdot),y(\cdot))$ is a~generalized trajectory of \eqref{eq:SysNode} in $\R_{\ge0}$. Further, for all such trajectories, the corresponding dissipation output is in $\L^2(\R_{\ge0};\W)$ with
\begin{equation}\label{eq:dissineqinfhor}
    \|w(\cdot)\|^2_{\L^2(\R_{\ge0};\W)} = -\mathcal{S}(x_0) + J(u(\cdot),y(\cdot))<\infty.
\end{equation}
\end{theorem}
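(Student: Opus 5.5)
The proof splits into existence and the identity. For existence I would use Assumption~\ref{assum:StateFeedbackStab}. Fix an admissible state feedback $F\&G$ for which the closed-loop node $S_K$ in \eqref{eq:SysNode_K} is infinite-time well-posed with strongly stable semigroup. Take $v\equiv 0$ and let $(x_K,y_K,z_K)$ be the generalized trajectory of \eqref{eq:SysK} with $x_K(0)=x_0$. Infinite-time well-posedness gives $y_K\in\L^2(\R_{\ge0};\Y)$ and $z_K\in\L^2(\R_{\ge0};\U)$. Strong stability gives $x_K(t)=\mathbb{T}_K(t)x_0\to0$. By the correspondence between trajectories of \eqref{eq:SysK} and \eqref{eq:SysNodeExtraOut}, setting $u:=z_K$, $y:=y_K$, $x:=x_K$ yields a generalized trajectory of \eqref{eq:SysNode} with the required properties. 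To get this correspondence for generalized (not only classical) trajectories, I would approximate $x_0$ by initial data compatible with $v=0$, i.e.\ $x_0\in\dom(A_K)$. For such data the closed-loop trajectory is classical, and its image under $M^{-1}$ is a classical trajectory of the open-loop node. Well-posedness of $S_K$ then lets me pass to the limit in $C$ and $\L^2_{\mathrm{loc}}$.

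For the identity, I would start from the finite-horizon relation \eqref{eq:DissipEq_DissipOutput}, which is equivalent to \eqref{eq:RiccatiDissip}. It holds for classical trajectories and I first extend it to generalized trajectories on $[0,t]$. Let $(x_k,u_k,y_k)$ be classical trajectories approximating $(x,u,y)$ as in Definition~\ref{def:genSol_SN}. Then $\mathcal{S}(x_k(t))\to\mathcal{S}(x(t))$ and $\mathcal{S}(x_k(0))\to\mathcal{S}(x(0))$, since $\mathcal{S}$ is a bounded quadratic form. Also $\int_0^t s(y_k,u_k)\to\int_0^t s(y,u)$, since $s$ is a bounded quadratic form on $\Y\times\U$ and we have $\L^2([0,t])$ convergence. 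Hence $\|w_k\|_{\L^2([0,t];\W)}$ converges, and applying the identity to differences of trajectories (the relation is quadratic, and by polarization holds for the associated sesquilinear form) shows that $(w_k)$ is Cauchy in $\L^2([0,t];\W)$. Its limit is by definition the dissipation output $w$ of the generalized trajectory of the extended node \eqref{eq:ODEnode_Extended}, and
\[
\|w\|^2_{\L^2([0,t];\W)}=\mathcal{S}(x(t))-\mathcal{S}(x_0)+\int_0^t s(y(\tau),u(\tau))\,\d\tau .
\]
Here I use that $(x_k,u_k,(y_k,w_k))$ are classical trajectories of the extended node, which is immediate because $K\&L$ is bounded on $\dom(A\&B)$.

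Finally I let $t\to\infty$. Since $u\in\L^2$ and $y\in\L^2$, the integrand $s(y,u)$ lies in $\L^1(\R_{\ge0})$, so the integral converges to $J(u,y)$, which is finite. Since $x(t)\to0$ and $\mathcal{S}$ is bounded, $\mathcal{S}(x(t))\to0$. The left-hand side is monotone in $t$ and converges to $\|w\|^2_{\L^2(\R_{\ge0};\W)}$, which is therefore finite and equals $-\mathcal{S}(x_0)+J(u,y)$. This proves \eqref{eq:dissineqinfhor}.

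I expect the main obstacle to be the approximation step for generalized trajectories: showing that $w_k$ converges, and that the closed-loop trajectory in the existence part is a generalized open-loop trajectory in the sense of Definition~\ref{def:genSol_SN}. For the first point, polarization of \eqref{eq:DissipEq_DissipOutput} applied to $x_k-x_m$ is the cleanest route. For the second, I would rely on the trajectory correspondence in \cite[Sec.~7.3]{Sta05book}.
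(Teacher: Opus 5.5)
Your proposal is correct and follows the paper's route. Existence comes from the stabilizing feedback with zero closed-loop input, and the identity comes from extending \eqref{eq:DissipEq_DissipOutput} to generalized trajectories on $[0,t]$ by classical approximation and then letting $t\to\infty$. Your version is tighter than the printed proof in two places: applying the identity to $(x_k-x_m,u_k-u_m,y_k-y_m)$ genuinely shows $(w_k)$ is Cauchy, whereas the paper only passes to the limit in the norms, and your sign $\mathcal{S}(x(t))-\mathcal{S}(x_0)$ matches \eqref{eq:DissipEq_DissipOutput} and the claimed $-\mathcal{S}(x_0)$, whereas the paper's Step~2 writes $\mathcal{S}(x_0)-\mathcal{S}(x_n(t))$.
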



\begin{proof}
Let $x_0\in\X$ be fixed. \\
{\em Step~1:}
We show that there exists $u(\cdot)\in\L^2(\R_{\geq 0};\U)$ such that, for some  $y(\cdot)\in\L^2(\R_{\geq 0};\Y)$ and $x(\cdot)\in C(\R_{\geq 0};\X)$ with $\lim_{t\to\infty}x(t)=0$, $(x(\cdot),u(\cdot),y(\cdot))$ is a~generalized trajectory of \eqref{eq:SysNode} in $\R_{\ge0}$. 
By Assumption~\ref{assum:StateFeedbackStab}, the system \eqref{eq:SysNode} is state-feedback stabilizable in the sense of Definition \ref{def:StateFeedStab}.
Hence, we find a~generalized trajectory of \eqref{eq:SysK} with zero input and initial state $x_0$, denoted by $\big(x_K(\cdot),0,\spvek{y_K(\cdot)}{z_K(\cdot)}\big)$, which has the properties 
$y_K(\cdot)\in \L^2(\R_{\geq 0};\Y)$, $z_K(\cdot)\in \L^2(\R_{\geq 0};\U)$, and
$x_K(\cdot)\in C(\R_{\geq 0};\X)$ with $\lim_{t\to\infty}x_K(t)=0$. Then we have that $(x(\cdot),u(\cdot),y(\cdot)):=(x_K(\cdot),z_K(\cdot),y_K(\cdot))$ is a~generalized trajectory of \eqref{eq:SysNode} in $\R_{\ge0}$ with the desired properties.

{\em Step~2:} We show the statement on the dissipation output. By definition of generalized trajectories, we have, for any $t>0$, that there exists a~sequence of classical trajectories $(x_n(\cdot),u_n(\cdot),y_n(\cdot))$ of \eqref{eq:SysNode} in $[0,t]$, such that $x_n(\cdot)$ converges pointwise to $x(\cdot)$, and $(u_n(\cdot))$, $(y_n(\cdot))$ converge to the restrictions of $u(\cdot)$ and $y(\cdot)$ to $[0,t]$, respectively. Denote the corresponding dissipation output sequence by $(w_n(\cdot))$.
Now the dissipation inequality gives
\[\forall\,n\in\N:\quad     \|w_n(\cdot)\|^2_{\L^2([0,t];\W)}= \mathcal{S}(x_0)-\mathcal{S}(x_n(t))+ \int_0^t s(u_n(\tau),y_n(\tau))\d\tau.
\]
Now taking the limit $n\to\infty$ gives that $w_t(\cdot):=K\&L\spvek{x(\cdot)}{u(\cdot)}\in \L^2([0,t];\W)$ is well-defined with 
\begin{equation}
\|w_t(\cdot)\|^2_{\L^2([0,t];\W)}= \mathcal{S}(x_0)-\mathcal{S}(x(t))+ \int_0^t s(u(\tau),y(\tau))\d\tau.\label{eq:dissinieqfinite}
\end{equation}
(Well-)defining $w(\cdot)\in \L^2_{\rm loc}(\R_{\ge0};\W)$ by $w\vert_{[0,t]}(\cdot)=w_t(\cdot)$ for all $t>0$, we obtain, by taking the limit $t\to\infty$ and using that $\lim_{t\to\infty}x(t)=0$, that  \eqref{eq:dissinieqfinite} implies that $w(\cdot)\in \L^2(\R_{\ge0};\W)$ with \eqref{eq:dissineqinfhor}.
\end{proof}

From now on, we will complement \eqref{eq:SysNode} with the dissipation output $w$, leading to the system
\eqref{eq:ODEnode_Extended}
with state $x(\cdot)$, input $u(\cdot)$ and output $\spvek{y(\cdot)}{w(\cdot)}$. 
In what follows, we will use the following notation
\begin{equation*}
    \tilde{s}\big(\spvek{y}{w},u\big)=:\left\langle \left(\begin{smallmatrix}
    \spvek{y}{w}\\ u
\end{smallmatrix}\right),\pmatsmall{0 & 0 & 0\\ 0 & I_\U & 0\\0 & 0 & 0} \left(\begin{smallmatrix}
    \spvek{y}{w}\\ u
\end{smallmatrix}\right)\right\rangle_{\Y\times \W\times \U}.
\label{eq:tilde_s}
\end{equation*}
It is clear that $\tilde{s}$ is a nonnegative quadratic form and that 
\[J_w(u(\cdot),w(\cdot)) = \int_0^\infty \tilde{s}((y(\tau),w(\tau)),u(\tau))\d\tau.\] 
It follows immediately that the value function in \eqref{eq:ValJe} can alternatively be rewritten as 
    \begin{equation}
        \mathrm{Val}_{J_w}(x_0) = \inf\setdef*{J_w(u(\cdot),w(\cdot))}{\begin{aligned}& \Big(x(\cdot),u(\cdot),\spvek{y(\cdot)}{w(\cdot)}\Big)\in C(\R_{\ge0};\X)\times\L^2(\R_{\ge0};\U)\times\L^2(\R_{\ge0};\Y\times\W)\text{ is }\\&
        \text{ a generalized trajectory of \eqref{eq:ODEnode_Extended} on $\R_{\geq 0}$ with $x(0) = x_0$ and $\lim_{t\to\infty}x(t)=0$}\end{aligned}}.\label{eq:ValJe_alt}
    \end{equation}
By invoking that $\tilde{s}(\cdot,\cdot)$ is nonnegative, \cite[Thm.~5.4]{Reis2025_Dissip} implies that
$\mathrm{Val}_{J_w}$ is a~bounded and quadratic storage function of \eqref{eq:ODEnode_Extended} with respect to the supply rate $\tilde{s}(\cdot,\cdot)$. Then, by \cite[Thm.~4.1]{Reis2025_Dissip}, there exists some self-adjoint and nonnegative $P_w\in\Lb(\X)$, such that 
\begin{equation*}\mathrm{Val}_{J_w}(x_0)=\scprod{x_0}{P_wx_0}_\X\quad\forall\,x_0\in\X,\end{equation*}
and there exists some Hilbert space $\W_w$ and some $E\&F\in\Lb(\dom A\&B;\W_w)$ with
\begin{equation}
        2\re\langle {P}_wx_0,A\& B\left(\begin{smallmatrix}
        x_0\\ u_0    \end{smallmatrix}\right)\rangle_\X + \tilde{s}\left(\sbvek{C\& D}{K\& L}\left(\begin{smallmatrix}
        x_0\\ u_0    \end{smallmatrix}\right),u_0\right) = \Vert E\& F\left(\begin{smallmatrix}
        x_0\\ u_0   \end{smallmatrix}\right)\Vert_{\W_w}^2\quad\forall\, \spvek{x_0}{u_0}\in\dom(A\& B).
        \label{eq:Jwdiss}
        \end{equation}
Theorem~\ref{thm:ReExprCost} can be used to show the following the link between $\mathrm{Val}_J$ and $\mathrm{Val}_{J_w}$.



\begin{proposition}\label{prop:ValueFct}
Consider a system of the form \eqref{eq:SysNode} where $S := \pmatsmall{A\& B\\ C\& D}$ is a system node on the spaces $(\U,\X,\Y)$ and suppose furthermore that $S$ satisfies Assumptions \ref{assum:dissip} and \ref{assum:StateFeedbackStab}. Further, let
$K\&L\in \Lb(\dom(A\&B),\W)$, $P=P^*\in\Lb(\X)$, such that \eqref{eq:RiccatiDissip} holds.
Then the value functions \eqref{eq:ValJ} and \eqref{eq:ValJe_alt} are related by 
 \begin{equation}
\mathrm{Val}_J(x_0) =  \mathrm{Val}_{J_w}(x_0)+\mathcal{S}(x_0)\quad\forall x_0\in\X.\label{eq:ValJident}
 \end{equation}
In particular, $\mathrm{Val}_J$ is a~bounded quadratic form on $\X$.
\end{proposition}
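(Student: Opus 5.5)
The identity \eqref{eq:ValJident} will follow by comparing the two infima trajectory by trajectory, using Theorem~\ref{thm:ReExprCost}. The key observation is that both infima in \eqref{eq:ValJ} and \eqref{eq:ValJe_alt} range over essentially the same set of trajectories. Let $\mathcal{T}(x_0)$ be the set of generalized trajectories $(x(\cdot),u(\cdot),y(\cdot))$ of \eqref{eq:SysNode} on $\R_{\ge0}$ with $u(\cdot)\in\L^2(\R_{\ge0};\U)$, $y(\cdot)\in\L^2(\R_{\ge0};\Y)$, $x(0)=x_0$ and $\lim_{t\to\infty}x(t)=0$. By Step~1 of Theorem~\ref{thm:ReExprCost}, $\mathcal{T}(x_0)\neq\emptyset$.

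\emph{Step 1 (embedding).} Take $(x(\cdot),u(\cdot),y(\cdot))\in\mathcal{T}(x_0)$. By Theorem~\ref{thm:ReExprCost}, the dissipation output $w(\cdot)=K\&L\spvek{x(\cdot)}{u(\cdot)}$ lies in $\L^2(\R_{\ge0};\W)$. I then need to check that $\big(x(\cdot),u(\cdot),\spvek{y(\cdot)}{w(\cdot)}\big)$ is a generalized trajectory of \eqref{eq:ODEnode_Extended}. Take the approximating classical trajectories $(x_n,u_n,y_n)$ of \eqref{eq:SysNode}. These are also classical trajectories of the extended node, with third output $w_n=K\&L\spvek{x_n}{u_n}$. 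The limit argument in Step~2 of Theorem~\ref{thm:ReExprCost}, via the identity \eqref{eq:DissipEq_DissipOutput} applied to differences (all terms are quadratic, so $\|w_n-w_m\|^2$ is controlled by the differences of $x_n$, $u_n$, $y_n$), gives $w_n\to w$ in $\L^2_{\rm loc}$. Conversely, every admissible trajectory of \eqref{eq:ODEnode_Extended} obviously yields an element of $\mathcal{T}(x_0)$ after dropping $w$. Hence both infima are taken over the same set, parametrized by $\mathcal{T}(x_0)$.

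\emph{Step 2 (compare costs).} For each element of $\mathcal{T}(x_0)$, \eqref{eq:dissineqinfhor} gives
\[ J(u(\cdot),y(\cdot)) = \mathcal{S}(x_0) + \|w(\cdot)\|^2_{\L^2(\R_{\ge0};\W)} = \mathcal{S}(x_0)+J_w(u(\cdot),w(\cdot)). \]
Since $\mathcal{S}(x_0)$ is constant over $\mathcal{T}(x_0)$, taking infima on both sides yields \eqref{eq:ValJident}. Finiteness follows because $\mathrm{Val}_{J_w}(x_0)\in[0,\infty)$, as the set is nonempty and the costs are nonnegative.

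\emph{Step 3 (quadratic form).} As shown above via \cite[Thm.~5.4]{Reis2025_Dissip}, $\mathrm{Val}_{J_w}(x_0)=\langle x_0,P_wx_0\rangle_\X$ with $P_w\in\Lb(\X)$. Together with $\mathcal{S}(x_0)=\langle x_0,Px_0\rangle_\X$, this gives $\mathrm{Val}_J(x_0)=\langle x_0,(P_w+P)x_0\rangle_\X$, a bounded quadratic form.

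The main obstacle is Step~1: justifying that appending the dissipation output preserves the notion of generalized trajectory, i.e.\ that $w_n\to w$ in $\L^2_{\rm loc}$ rather than merely that $w$ is defined as a limit of norms. If the polarization argument via \eqref{eq:DissipEq_DissipOutput} proves awkward, I would instead use the fact that $K\&L$ is bounded on $\dom(A\&B)$ with the graph norm. I would then argue through the stabilized, infinite-time well-posed closed loop of Assumption~\ref{assum:StateFeedbackStab}, where dependence on $(x_0,v)$ is continuous.
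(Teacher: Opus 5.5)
Your proposal is correct and follows the paper's own argument. You infimize the identity \eqref{eq:dissineqinfhor} from Theorem~\ref{thm:ReExprCost} over the common set of admissible trajectories, then write $\mathrm{Val}_J$ as the sum of the quadratic forms $\langle x_0,P_wx_0\rangle_\X$ and $\langle x_0,Px_0\rangle_\X$. Your Step~1 applies \eqref{eq:DissipEq_DissipOutput} to differences of classical trajectories to show that $(w_n)$ is Cauchy in $\L^2_{\rm loc}$; this correctly fills in a detail the paper leaves implicit, so the fallback via the stabilized closed loop is not needed.
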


\begin{proof}
As discussed before this proposition, $\mathrm{Val}_{J_w}$ is a bounded quadratic functional. The identity \eqref{eq:ValJident} follows from Theorem~\ref{thm:ReExprCost} by an infimization of \eqref{eq:dissineqinfhor} over all trajectories of \eqref{eq:ODEnode_Extended} with $x(0)=x_0$ and $u\in \L^2(\R_{\ge0};\U)$, $y\in \L^2(\R_{\ge0};\Y)$. In particular, 
$\mathrm{Val}_J$ is quadratic, since it is the sum of two quadratic forms.
    \end{proof}
Next, we use that $\mathrm{Val}_{J_w}$ is a bounded quadratic storage function of the extended system \eqref{eq:ODEnode_Extended} with supply rate $\tilde{s}(\cdot,\cdot)$ to show that $\mathrm{Val}_{J}$ is likewise a storage function for \eqref{eq:SysNode}, now equipped with the (possibly indefinite) supply rate $s(\cdot,\cdot)$. Moreover, we show that this storage function is maximal in a suitable sense.
        \begin{theorem}\label{thm:MainThm}
Consider a system of the form \eqref{eq:SysNode} where $S := \pmatsmall{A\& B\\ C\& D}$ is a system node on the spaces $(\U,\X,\Y)$ and suppose furthermore that $S$ satisfies Assumptions \ref{assum:dissip} and \ref{assum:StateFeedbackStab}. Then the value function  $\mathrm{Val}_J$ in \eqref{eq:ValJ} is a~bounded and quadratic storage function of \eqref{eq:SysNode} with respect to the supply rate $s(\cdot,\cdot)$. Moreover, any further bounded quadratic storage function $\mathrm{V}$ of \eqref{eq:SysNode} with respect to the supply rate $s(\cdot,\cdot)$ fulfills
\[\mathrm{Val}_J(x_0)\geq\mathrm{V}(x_0)\quad\forall\,x_0\in\X.\]


    \end{theorem}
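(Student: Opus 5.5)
The proof has two parts: first that $\mathrm{Val}_J$ is a storage function, then that it dominates every other bounded quadratic storage function.

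\emph{Storage function property.} By Proposition~\ref{prop:ValueFct}, $\mathrm{Val}_J=\mathrm{Val}_{J_w}+\mathcal{S}$, and both summands are bounded quadratic forms. So $\mathrm{Val}_J$ is a bounded quadratic form with associated operator $P_w+P$. It remains to verify the dissipation inequality \eqref{eq:Dissip} for $\mathrm{Val}_J$.

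Take a classical trajectory $(x(\cdot),u(\cdot),y(\cdot))$ of \eqref{eq:SysNode} on $[0,t]$. It is then a classical trajectory of \eqref{eq:ODEnode_Extended} with $w=K\&L\spvek{x}{u}$. Since $\mathrm{Val}_{J_w}$ is a storage function for \eqref{eq:ODEnode_Extended} with respect to $\tilde s$, we have
\[
\mathrm{Val}_{J_w}(x(t))-\mathrm{Val}_{J_w}(x(0))+\int_0^t\|w(\tau)\|_\W^2\d\tau\ge0 .
\]
By \eqref{eq:DissipEq_DissipOutput}, the integral equals $\mathcal{S}(x(t))-\mathcal{S}(x(0))+\int_0^t s(u,y)\d\tau$. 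Adding this gives exactly the dissipation inequality for $\mathrm{Val}_J=\mathrm{Val}_{J_w}+\mathcal{S}$. Alternatively, one can sum the Lur'e identities \eqref{eq:RiccatiDissip} and \eqref{eq:Jwdiss} to get a Lur'e equation with operator $P+P_w$ and right-hand side $\|E\&F\spvek{x_0}{u_0}\|^2\ge0$, then invoke \cite[Thm.~4.1]{Reis2025_Dissip}. I would present the direct trajectory argument.

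\emph{Maximality.} Let $\mathrm{V}$ be any bounded quadratic storage function for $s$. Fix $x_0$ and an admissible generalized trajectory in \eqref{eq:ValJ}: $u,y\in\L^2$, $x(0)=x_0$, $x(t)\to0$.

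First, extend the dissipation inequality for $\mathrm{V}$ from classical to generalized trajectories on $[0,t]$. This uses the approximating sequence from Definition~\ref{def:genSol_SN}, exactly as in Step~2 of the proof of Theorem~\ref{thm:ReExprCost}. The required continuity holds because $\mathrm{V}$ is a bounded quadratic form, hence continuous on $\X$, and because $s$ is a bounded quadratic form, so $\int_0^t s(u_n,y_n)\d\tau\to\int_0^t s(u,y)\d\tau$ under $\L^2([0,t])$ convergence. This yields
\[
\mathrm{V}(x(t))-\mathrm{V}(x_0)+\int_0^t s(u,y)\d\tau\ge0 .
\]
Next, let $t\to\infty$. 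Since $x(t)\to0$ and $\mathrm{V}$ is continuous, $\mathrm{V}(x(t))\to0$. Since $u,y\in\L^2$, the integral converges to $J(u,y)$. Hence $J(u,y)\ge \mathrm{V}(x_0)$. Taking the infimum over all admissible trajectories, which form a nonempty set by Theorem~\ref{thm:ReExprCost}, gives $\mathrm{Val}_J(x_0)\ge\mathrm{V}(x_0)$.

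The main obstacle is the passage from classical to generalized trajectories. The approximating classical trajectories must be taken on all of $[0,t]$ with $x_n(0)\to x_0$ and $x_n(t)\to x(t)$; this follows from uniform convergence on compacts. The limit of the supply integral is harmless because $s$ is bounded, but this bookkeeping has to be stated carefully. Everything else is an algebraic consequence of Proposition~\ref{prop:ValueFct}.
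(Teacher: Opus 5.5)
Your proof is correct. The storage-function part is the paper's argument in integrated form. The paper adds the Lur'e identities \eqref{eq:RiccatiDissip} and \eqref{eq:dissipineq0} to obtain \eqref{eq:dissineqtotal} for $P+P_w$, and then returns to the dissipation inequality via \cite[Thm.~4.1]{Reis2025_Dissip}. You add the trajectory-level identities directly, which skips that round trip but does not produce the operator $E\&F$ in \eqref{eq:dissineqtotal} that the paper uses afterwards.

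The maximality part takes a genuinely different route. The paper stays at the operator level:
\begin{itemize}
\item it turns $\mathrm{V}=\langle\cdot,\tilde P\cdot\rangle$ into a Lur'e inequality via \cite[Thm.~4.1]{Reis2025_Dissip};
\item it subtracts \eqref{eq:RiccatiDissip} to see that $\tilde P-P$ defines a storage function of \eqref{eq:ODEnode_Extended} for the nonnegative supply $\tilde{s}$;
\item it applies maximality of the value function in the definite case \cite[Thm.~5.4]{Reis2025_Dissip} to get $\tilde P-P\le P_w$;
\item it concludes with Proposition~\ref{prop:ValueFct}.
\end{itemize}
You instead bound $\mathrm{V}(x_0)$ by the cost of every admissible trajectory. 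You pass the dissipation inequality to generalized trajectories by approximation, let $t\to\infty$ using $x(t)\to 0$ and $s(u,y)\in\L^1$, and then infimize.

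Your version is elementary and self-contained. For this direction it needs neither Proposition~\ref{prop:ValueFct} nor the cited theorems, and it uses only continuity of $\mathrm{V}$ and $\mathrm{V}(0)=0$, not its quadratic structure. It also shows that indefiniteness of $s$ plays no role in the upper bound, so the reduction to the definite problem is really needed only for the first assertion. The paper's version fits its reduction-to-the-definite-case theme and expresses the result as the operator ordering $\tilde P\le P+P_w$ among solutions of the Lur'e inequality.
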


    \begin{proof}
        Let $P$ and $K\& L$ be as in \eqref{eq:RiccatiDissip}. 
        Then, as elaborated prior to Proposition~\ref{prop:ValueFct}, there exists some self-adjoint and nonnegative $P_w\in\Lb(\X)$,  some Hilbert space $\W_w$ and some $E\&F\in\Lb(\dom (A\&B);\W_w)$, such that \eqref{eq:Jwdiss} holds. Expanding $\tilde{s}(\cdot,\cdot)$
        in \eqref{eq:Jwdiss} then gives rise to
\begin{equation}
        2\re\langle {P}_wx_0,A\& B\left(\begin{smallmatrix}
        x_0\\ u_0    \end{smallmatrix}\right)\rangle_\X + \Vert K\& L\spvek{x_0}{u_0}\Vert_\W^2 = \Vert E\& F\left(\begin{smallmatrix}
        x_0\\ u_0   \end{smallmatrix}\right)\Vert_{{\W}_w}^2\quad\forall\spvek{x_0}{u_0}\in\dom (A\&B).\label{eq:dissipineq0}
\end{equation}
Now an addition of \eqref{eq:RiccatiDissip}
gives 
\begin{equation}
        2\re\langle (P+{P}_w)x_0,A\& B\left(\begin{smallmatrix}
        x_0\\ u_0    \end{smallmatrix}\right)\rangle_\X + s(C\& D\left(\begin{smallmatrix}
        x_0\\ u_0
    \end{smallmatrix}\right),u_0) = \Vert E\& F\left(\begin{smallmatrix}
        x_0\\ u_0   \end{smallmatrix}\right)\Vert_{{\W}_w}^2\quad\forall\spvek{x_0}{u_0}\in\dom (A\&B).\label{eq:dissineqtotal}
\end{equation}
Since $\mathcal{S}(x_0)=\scprod{x_0}{Px_0}$ and $\mathrm{Val}_{J_w}(x_0)=\scprod{x_0}{P_wx_0}$ for all $x_0\in\X$, Proposition~\ref{prop:ValueFct}
        yields that 
\[\mathrm{Val}_{J}(x_0)=\scprod{x_0}{(P+P_w)x_0}_\X\quad \forall\,x_0\in\X.\]
Combining the latter with \eqref{eq:dissineqtotal}, \cite[Thm.~4.1]{Reis2025_Dissip} implies that $\mathrm{Val}_{J}$ is a~storage function of \eqref{eq:SysNode} with respect to the supply rate $s(\cdot,\cdot)$.\\
To prove the maximality property of $\mathrm{Val}_{J}$, assume that $\mathrm{V}$ is another bounded quadratic storage function of \eqref{eq:SysNode} with respect to the supply rate $s(\cdot,\cdot)$. Then there exists some $\tilde{P}=\tilde{P}^*\in\Lb(\X)$, such that 
\[\mathrm{V}(x_0)=\scprod{x_0}{\tilde{P}x_0}_\X\quad \forall\,x_0\in\X.\]
and \cite[Thm.~4.1]{Reis2025_Dissip} implies that
\[        2\re\langle \tilde{P}x_0,A\& B\left(\begin{smallmatrix}
        x_0\\ u_0    \end{smallmatrix}\right)\rangle_\X + s(C\& D\left(\begin{smallmatrix}
        x_0\\ u_0
    \end{smallmatrix}\right),u_0) \ge0\quad\forall\spvek{x_0}{u_0}\in\dom (A\&B).\]
Now subtracting \eqref{eq:RiccatiDissip} gives
\[        2\re\langle (\tilde{P}-P)x_0,A\& B\left(\begin{smallmatrix}
        x_0\\ u_0    \end{smallmatrix}\right)\rangle_\X + \tilde{s}(\sbvek{C\& D}{K\&L}\left(\begin{smallmatrix}
        x_0\\ u_0
    \end{smallmatrix}\right),u_0) \ge0\quad\forall\spvek{x_0}{u_0}\in\dom (A\&B).\]
This means that $x_0\mapsto \scprod{x_0}{(\tilde{P}-P)x_0}_\X$ is a~storage function to the extended system \eqref{eq:ODEnode_Extended} with respect to the nonnegative supply rate $\tilde{s}(\cdot,\cdot)$. Then \cite[Thm.~5.4]{Reis2025_Dissip} implies that 
\[\scprod{x_0}{(\tilde{P}-P)x_0}_\X\leq \scprod{x_0}{P_wx_0}_\X.\]
Therefore, for all $x_0\in\X$, we have
\[\mathrm{V}(x_0)=\scprod{x_0}{\tilde{P}x_0}_\X\leq \scprod{x_0}{(P+P_w)x_0}_\X.\]
 Invoking that, by Proposition~\ref{prop:ValueFct}, the latter expression equals to $\mathrm{Val}_{J}$, the statement is proven. 
    \end{proof}
   \begin{remark}
       Impedance and scattering passive systems are dissipative with respect to the supply rate functionals $s(y,u) = 2\re\langle u,y\rangle$ and $s(y,u) = \Vert u\Vert^2-\Vert y\Vert^2$, respectively, and storage function $\mathcal{S}(x_0) = -\Vert x_0\Vert^2_\X$, whence $P = -I_\X$. This means that minimizing the cost $J(u(\cdot),y(\cdot)) = \int_0^\infty s(y(\tau),u(\tau))\d\tau$ in such settings is equivalent in minimizing $J_w(u(\cdot),w(\cdot)) = \int_0^\infty \Vert w(\tau)\Vert_\W^2\d\tau$, where $w$ is the dissipation output, also expressed as $w = K\& L\spvek{x}{u}$, for some $K\&L\in\mathcal{L}_b(\dom(A\& B),\W)$ and some Hilbert space $\W$, provided that Assumption \ref{assum:StateFeedbackStab} is satisfied. Furthermore, the value functions associated to $J$ and $J_w$, denoted by $\mathrm{Val}_J(x_0)$ and $\mathrm{Val}_{J_w}(x_0)$, respectively, satisfy $\mathrm{Val}_J(x_0) + \Vert x_0\Vert_\X = \mathrm{Val}_{J_w}(x_0) = \langle x_0,P_wx_0\rangle_\X$ for some self-adjoint operator $P_w\in\mathcal{L}_b(\X)$, see Prop.~\ref{prop:ValueFct}. Thm.~\ref{thm:MainThm} then yields the existence of a Hilbert space $\W_w$ and an operator $E\& F\in\mathcal{L}_b(\dom(A\& B),\W_w)$ such that $P_w$ satisfies the operator-node Lur'e equation \eqref{eq:dissineqtotal}. According to $P = -I_\X$, \eqref{eq:dissineqtotal} reduces to 
       \begin{equation*}2\re\langle P_wx_0,A\& B\spvek{x_0}{u_0}\rangle_\X + s(C\& D\spvek{x_0}{u_0},u_0) = \Vert E\& F\spvek{x_0}{u_0}\Vert_{W_w}^2 + 2\re \langle x_0,A\& B\spvek{x_0}{u_0}\rangle_\X
       \end{equation*}
       for all $\spvek{x_0}{u_0}\in\dom(A\& B)$.       
   \end{remark}
\begin{remark}
In this article, our primary focus is on the value function. Accordingly, we have not discussed the minimizing input (i.e., the optimal control) in detail. First, it should be noted that an optimal control need not exist; in general, one may only have an infimizing (not necessarily convergent) sequence.
Nevertheless, Theorem~\ref{thm:MainThm}, in particular \eqref{eq:dissineqinfhor}, implies that, if an optimal control $u^{\mathrm{opt}}(\cdot) \in \L^2(\R_{\ge 0};\,\cdot)$ exists, then it minimizes the cost functional $J(\cdot,\cdot)$ for the system with initial condition $x(0)=x_0$ if and only if the associated dissipation output $w^{\mathrm{opt}}(\cdot)$ has minimal $\L^2$-norm.
Using \cite[Rem.~5.6]{Reis2025_Dissip}, which treats optimal control problems with semidefinite cost functionals, it follows that an optimal control is characterized by the property that, for the operator $E\&F$ in \eqref{eq:dissipineq0}, the corresponding state trajectory $x^{\mathrm{opt}}(\cdot)$ satisfies
\[
E \& F \, \spvek{x^{\mathrm{opt}}(\cdot)}{u^{\mathrm{opt}}(\cdot)} = 0.
\]

Altogether, this shows that an optimal control for \eqref{eq:SysNode} with cost functional \eqref{eq:CostIntro} and initial value $x_0$ is characterized by the infinite-dimensional differential-algebraic system
\[
\spvek{\dot{x}^{\mathrm{opt}}(t)}{0}
=
\pmatsmall{A\& B\\[1mm] E\& F}
\spvek{x^{\mathrm{opt}}(t)}{u^{\mathrm{opt}}(t)},\quad x^{\mathrm{opt}}(0)=x_0
\]
\end{remark}

\section{Examples}\label{sec:Examples}

\subsection{A transport equation with boundary input and output}
    We consider the partial differential equation 
    \begin{equation}
    \label{eq:Shift}
    \begin{aligned}
        \tfrac{\partial x}{\partial t}(\xi,t) &= -\tfrac{\partial x}{\partial\xi}(\xi,t),\,\, x(\xi,0) = x_0(\xi),\\
        u(t) &= x(0,t),\qquad y(t) = x(1,t),
    \end{aligned}
    \end{equation}
    $t\geq 0$, $\xi\in[0,1]$, where $x_0\in\X := \L^2([0,1];\C)$, $u(t)\in\C$ and $y(t)\in\C$ are the input and the output, respectively. For a fixed $x_0 \in\X$, we want to minimize $J(u(\cdot),y(\cdot))$, where
    \begin{equation*}
        J(u(\cdot),y(\cdot)) = \int_0^\infty 2\vert y(\tau)\vert^2 - \vert u(\tau)\vert^2\d\tau,
        \label{eq:cost_Example_Shift}
    \end{equation*}
    among those $u(\cdot)\in\L^2(\R_{\geq 0};\C)$ for which $y(\cdot)\in\L^2(\R_{\geq 0};\C)$ in \eqref{eq:Shift}. Observe that \eqref{eq:Shift} can be written within the system node formalism with 
    \begin{align*}
        &A\&B : \dom(A\& B)\to\X,\quad A\& B\spvek{x_0}{u_0} = -\tfrac{\dx_0}{\d\xi}\\
        &C\& D:\dom(A\& B)\to\C,\quad C\&D\spvek{x_0}{u_0} = x_0(1),
    \end{align*}
    where $\dom(A\& B) = \left\{\spvek{x_0}{u_0}\in\H^1([0,1];\C)\times\C, x_0(0) = u_0\right\}$.
    Now consider $\mathcal{S}(x_0) = \Vert x_0\Vert^2_\X$. Developing the left-hand side of \eqref{eq:RiccatiDissip} with $P = I_\X$ and $\spvek{x_0}{u_0}\in\dom(A\& B)$ yields
    \begin{align*}
        &2\re\langle x_0,A\& B\spvek{x_0}{u_0}\rangle_\X + s(C\& D\spvek{x_0}{u_0},u_0)
        = \vert x_0(0)\vert^2 - \vert x_0(1)\vert^2 + 2\vert C\& D\spvek{x_0}{u_0}\vert^2 - \vert u_0\vert^2 = \vert C\&D\spvek{x_0}{u_0}\vert^2,
    \end{align*}
    which means that \eqref{eq:Shift} is dissipative with respect to the supply rate functional $s(y,u) = 2\vert y\vert^2-\vert u\vert^2$ and storage function $\mathcal{S}(x_0) = \Vert x_0\Vert^2_\X$. The corresponding operator $K\& L$ is given by $K\& L:\dom(A\& B)\to\C, K\& L\spvek{x_0}{u_0} = C\& D\spvek{x_0}{u_0}$. Moreover, it can be shown that \eqref{eq:Shift} is scattering passive, hence infinite-time well-posed. In addition, \eqref{eq:Shift} with $u(\cdot)\equiv 0$ yields a system with the property that $x(t)\to 0$ when $t\to\infty$. In particular, this implies that Assumption \ref{assum:StateFeedbackStab} is satisfied. Consequently, our minimization problem is equivalent to the minimization of $J_w(u(\cdot),w(\cdot)) = \int_0^\infty\vert w(\tau)\vert\d\tau = \int_0^\infty \vert y(\tau)\vert^2\d\tau$, subject to \eqref{eq:Shift}. According to \cite{HastirJacobZwartSIAM25}, the unique optimal control to this optimization problem is given by $u^{\opt}(\cdot)\equiv 0$ and the optimal cost is $\mathrm{Val}_{J_w}(x_0) = \Vert x_0\Vert^2_\X$, which suggests that $P_w$ in \eqref{eq:dissineqtotal} is $P_w = I_\X$. Now let us analyze the left-hand side of \eqref{eq:dissineqtotal} for $\spvek{x_0}{u_0}\in\dom(A\& B)$. It holds that
    \begin{multline*}
        2\re\langle (P + P_w)x_0,A\& B\spvek{x_0}{u_0}\rangle_\X + s(C\& D\spvek{x_0}{u_0},u_0)\\
        = 2\vert x_0(0)\vert^2 - 2\vert x_0(1)\vert^2 + 2\vert C\& D\spvek{x_0}{u_0}\vert^2 - \vert u_0\vert^2 = \vert u_0\vert^2.
    \end{multline*}
    This means that the statement of Theorem \ref{thm:MainThm} holds with $E\& F:\dom(A\& B)\to\C, E\& F\spvek{x_0}{u_0} = u_0$. The closed-loop system obtained by interconnecting \eqref{eq:Shift} with the optimal input $u^{\opt}(\cdot) \equiv 0$ is given by 
    \begin{subequations}
    \begin{align*}
        \tfrac{\partial x^{\opt}}{\partial t}(\xi,t) &= -\tfrac{\partial x^{\opt}}{\partial\xi}(\xi,t),\qquad x^{\opt}(\xi,0) = x_0(\xi),\\
        x^{\opt}(0,t) &= 0,\,\,x^{\opt}(1,t) = y(t).
    \end{align*}
    \end{subequations}
    It is easy to deduce that $x^{\opt}(\xi,t) = x_0(1-\xi+t)\mathbf{1}_{[0,\xi]}(t)$, which implies that 
    \begin{align*}
        \mathrm{Val}_{J_w}(x_0) &= \int_0^\infty\vert y(\tau)\vert^2\d\tau = \Vert x_0\Vert^2_\X = \langle x_0,P_wx_0\rangle_\X\\
        &=2\Vert x_0\Vert^2_\X - \mathcal{S}(x_0)\\
        &= \mathrm{Val}_J(x_0) - \mathcal{S}(x_0),
    \end{align*}
    which confirms the statement of Theorem \ref{thm:MainThm}.

\subsection{A wave equation with force control and velocity measurement}
As second example, we consider the following vibrating string
\begin{equation}
    \tfrac{\partial^2 \bm w}{\partial t^2}(\xi,t) = \tfrac{\partial^2 \bm w}{\partial \xi^2}(\xi,t),\qquad t\geq 0, \xi\in [0,1]
    \label{eq:String}
\end{equation}
where $\bm w(\xi,t)$ is the vertical displacement of the string at time $t$ and position $\xi$. We assume that the string is damped at its left hand-side; this is modelled by the boundary condition
\begin{equation}
    \tfrac{\partial \bm w}{\partial t}(0,t) = \tfrac{\partial \bm w}{\partial\xi}(0,t).
    \label{eq:StringDampedLeft}
\end{equation}
Moreover, the force is controlled at $\xi = 1$ and the velocity is measured at the same boundary point, i.e.
\begin{equation}
        u(t) = \tfrac{\partial \bm w}{\partial\xi}(1,t),\quad
        y(t) = \tfrac{\partial \bm w}{\partial t}(1,t).\label{eq:String_InOut}
\end{equation}
Our aim for this example is to minimize the supplied power, that is, to minimize the quantity
\begin{equation*}
    J(u(\cdot),y(\cdot)) = \int_0^\infty 2\re\langle u(\tau),y(\tau)\rangle_\C\d\tau.
\end{equation*}
With the variables $x_1(\xi,t) := \frac{\partial \bm w}{\partial t}(\xi,t)$ and $x_2(\xi,t) := \frac{\partial \bm w}{\partial\xi}(\xi,t)$, \eqref{eq:String} with \eqref{eq:StringDampedLeft} and \eqref{eq:String_InOut} is written equivalently as
\begin{equation}
\label{eq:String_ChgVar}
    \begin{aligned}
        \tfrac{\partial}{\partial t}\spvek{x_1}{x_2}(\xi,t) &= \pmatsmall{0 & 1\\[1mm]1 & 0}\tfrac{\partial}{\partial \xi}\spvek{x_1}{x_2}(\xi,t),\quad
        0 = x_1(0,t)-x_2(0,t),\\
        u(t) &= x_2(1,t),\quad
        y(t)  = x_1(1,t).
    \end{aligned}
\end{equation}
As state space, we consider $\X := \L^2([0,1];\C^2)$ equipped with the usual inner product. The input and output spaces are $\U = \C$ and $\Y = \C$, respectively. The above system admits a system node representation of the form \eqref{eq:SysNode} with 
\begin{align*}
    A\& B\spvek{x_0}{u_0} &= \pmatsmall{0 & 1\\[1mm]1 & 0}\tfrac{\d x_0}{\d\xi},\\
    \dom(A\& B) &= \left\{\spvek{\spvek{x_{0,1}}{x_{0,2}}}{u_0}\in\H^1([0,1];\C^2)\times\U, x_{0,1}(0) = x_{0,2}(0), u_0 = x_{0,2}(1)\right\},\\
    C\&D\spvek{\spvek{x_{0,1}}{x_{0,2}}}{u_0} &= x_{0,1}(1).
\end{align*}
Considering $\spvek{x_0}{u_0}\in\dom(A\& B)$, we have, by again denoting $\spvek{x_{0,1}}{x_{0,2}}$, that
\begin{align*}
    \re\langle A\& B\spvek{x_0}{u_0},x_0\rangle_\X &=\re\int_0^1 x_0(\xi)^*\pmatsmall{0 & 1\\[1mm]1 & 0}\tfrac{\d x_0}{\d\xi}(\xi)\d\xi\\
    &= \re\int_0^1 \overline{x_{0,1}(\xi)}\tfrac{\d x_{0,2}}{\d \xi}(\xi)\d\xi + \re\int_0^1 \overline{x_{0,2}(\xi)}\tfrac{\d x_{0,1}}{\d\xi}(\xi)\d\xi\\
    &= \re\langle C\& D\spvek{x_0}{u_0},u_0\rangle_\U - \vert x_{0,1}(0)\vert^2,
\end{align*}
which, according to \cite[Thm.~4.2]{Sta02}, means that \eqref{eq:String_ChgVar} is impedance passive. Moreover, the previous calculation implies that
\begin{equation}
-2\re\langle A\& B\spvek{x_0}{u_0},x_0\rangle_\X + 2\re\langle C\& D\spvek{x_0}{u_0},u_0\rangle_\U = 2\vert x_{0,1}(0)\vert^2,
\label{eq:DissipWave}
\end{equation}
for all $\spvek{x_0}{u_0}\in\dom(A\& B)$. Hence, \eqref{eq:String_ChgVar} is dissipative with respect to the supply rate functional $s(y,u) = 2\re(\overline{ u}{y})$ (which corresponds to $S=1$ and $Q=R=0$) and bounded storage function $\mathcal{S}(x_0) = -\Vert x_0\Vert_\X^2$. The relation \eqref{eq:DissipWave} suggests that $K\& L\spvek{x_0}{u_0} = \sqrt{2}x_{0,1}(0)$. In order to convert our optimization problem, one needs to show that Assumption \ref{assum:StateFeedbackStab} holds. For this, impedance passivity of $\pmatsmall{A\& B\\ C\& D}$ implies that the output feedback $u(\cdot) = -C\&D\spvek{x(\cdot)}{u(\cdot)} + v(\cdot)$, where $v(\cdot)$ is an external input yields a system that is infinite-time well posed and whose semigroup generator is given by 
\begin{subequations}
\begin{align*}
    A^K x_0 &= \pmatsmall{0 & 1\\[1mm]1 & 0}\tfrac{\d x_0}{\d\xi}\\
    \dom(A^K) &= \left\{x_0\in\H^1([0,1];\C^2), x_{0,1}(0) = x_{0,2}(0), x_{0,1}(1) = -x_{0,2}(1)\right\},
\end{align*}
\end{subequations}
see \cite[Thm.~2.5]{HasPau2025}. Now we take $x_0\in\dom(A^K)$. It holds that
\begin{align*}
    \re\langle A^Kx_0,x_0\rangle_\X = \re\int_0^1 x_0(\xi)^*\pmatsmall{0 & 1\\[1mm]1 & 0}\tfrac{\d x_0}{\d\xi}(\xi)\d\xi 
    &= \re\big(\overline{x_{0,1}(1)}x_{0,2}(1)\big)
    - \re\big(\overline{x_{0,1}(0)}x_{0,2}(0)\big)\\
    &= -|x_{0,1}(1)|^2-\frac{1}{2}|x_{0,1}(0)|^2-\frac{1}{2}|x_{0,2}(0)|^2.
\end{align*}
As a consequence, \cite[Lem.~9.1.4]{JacobZwart} implies that $A^K$ is the generator of an exponentially stable semigroup, whence Assumption \ref{assum:StateFeedbackStab} is satisfied. Hence, minimizing $J$ is equivalent to minimizing $J_w$ given by 
\begin{equation}
    J_w(u(\cdot),w(\cdot)) = \int_0^\infty \vert w(\tau)\vert^2\d\tau = 2\int_0^\infty \vert x_{1}(0,\tau)\vert^2\d\tau.
    \label{eq:String_NewCost}
\end{equation}
According to \cite{HastirJacobZwartSIAM25}, it can be shown that the unique optimal control that minimizes \eqref{eq:String_NewCost} subject to \eqref{eq:String_ChgVar} is given by\footnote{This optimal control can be obtained by applying the results of \cite{HastirJacobZwartSIAM25} to the transformed system
$\frac{\partial X}{\partial t}(\xi,t) = -\frac{\partial X}{\partial\xi}(\xi,t)$, with
\begin{align*}
    \spvek{0}{u(t)} &= \pmatsmall{0 & -2\\[1mm]1 & 0}X(0,t) + \pmatsmall{0 & 0\\[1mm]0 & 1}X(1,t)\\
    w(t) &= \spvek{0 & -\sqrt{2}}{}X(0,t) + \spvek{\sqrt{2} & 0}{}X(1,t),
    \end{align*}
    where $X_1(\xi,t) = \frac{1}{2}\left[x_1(1-\xi,t) + x_2(1-\xi,t)\right]$ and $X_2(\xi,t) = \frac{1}{2}\left[-x_1(\xi,t)+x_2(\xi,t)\right]$. The optimal input for this transformed system is given by $u^{\opt}(t) = X_2^{\opt}(1,t)$, which corresponds to $u^{\opt}(t) = \frac{1}{2}\left[x_2^{\opt}(1,t) - x_1^{\opt}(1,t)\right]$.
}
\begin{equation}
    u^{\opt}(t) = \tfrac{1}{2}\left(x_2^{\opt}(1,t) - x_1^{\opt}(1,t)\right),
    \label{eq:StringOptInp}
\end{equation}
where $x^{\opt}(\cdot)$ is subject to\footnote{This optimal system is obtained by replacing $u(\cdot)$ by $u^{\opt}(\cdot)$ in \eqref{eq:String_ChgVar}.}
\begin{equation}
\label{eq:String_OptSys}
\begin{aligned}
        \tfrac{\partial}{\partial t}\spvek{x_1^{\opt}}{x_2^{\opt}}(\xi,t) &= \pmatsmall{0 & 1\\[1mm]1 & 0}\tfrac{\partial}{\partial \xi}\spvek{x_1^{\opt}}{x_2^{\opt}}(\xi,t),\\
        x_1^{\opt}(0,t) &= \phantom{-}x_2^{\opt}(0,t)\\
        x_1^{\opt}(1,t) &= -x_2^{\opt}(1,t).
    \end{aligned}
    \end{equation}
The corresponding optimal cost operator, denoted by $P_w$, is given by 
\begin{equation*}
    P_w = \tfrac{1}{2}\pmatsmall{I & I\\[1mm]I & I}.
    \label{eq:StringOptCostOp}
\end{equation*}
Let us start with the verification of the operator-node Lur'e equation \eqref{eq:dissineqtotal}. By considering $\spvek{x_0}{u_0}\in\dom(A\& B)$ and denoting $x_0:=\spvek{x_{0,1}}{x_{0,2}}$, it holds that
\begin{align*}
    &2\re\scprod{(P + P_w)x}{A\& B\spvek{x_0}{u_0}}_\X + s(C\&D\spvek{x_0}{u_0},u_0)\\
    &= \re\scprod*{\pmatsmall{-I & \phantom{-}I\\ \phantom{-}I & -I}x_0}{A\& B\spvek{x_0}{u_0}}_\X + 2\re(\,\overline{x_{0,1}(1)}\,u_0\,)\\
    &= \int_0^1 
    \spvek{~\\[1mm]x_{0,2}-x_{0,1}\\[0mm]}{x_{0,1}-x_{0,2}\\[1mm]}^*
    \spvek{\frac{\d x_{0,2}}{\d\xi}}{ \frac{\d x_{0,1}}{\d\xi}}\d\xi + 2\re(\,\overline{x_{0,1}(1)}\,u_0\,)\\
    &= \tfrac{1}{2}|x_{0,2}(1)|^2 - \tfrac{1}{2}|x_{0,2}(0)|^2 + \tfrac{1}{2}|x_{0,1}(1)|^2\\&\qquad-\tfrac{1}{2}|x_{0,1}(0)|^2 - \re(\,\overline{x_{0,1}(1)}x_{0,2}(1)\,) + \re(\,\overline{x_{0,1}(0)}x_{0,2}(0)\,) + 2\re(\,\overline{x_{0,1}(1)} u_0)\\
    &= \tfrac{1}{2}|x_{0,2}(1)|^2 + \tfrac{1}{2}|x_{0,1}(1)|^2 + \re(\,\overline{x_{0,1}(1)}u_0\,)\\
    &= \tfrac{1}{2}|u_0 + x_{0,1}(1)|^2=: \Vert E\& F\spvek{x_0}{u_0}\Vert_{\W_w}^2,
\end{align*}
where $\W_w = \C$ and $E\& F\spvek{x_0}{u_0}=\tfrac1{\sqrt{2}}\big(u_0 + x_{0,1}(1)\big)$. According to \cite{Reis2025_Dissip}, the optimal input is obtained by imposing the additional constraint 
\[E\& F\spvek{x^{\opt}_0}{u^{\opt}_0}=0.\] This leads to the feedback law $u^{\opt}_0 = -x_{0,1}^{\opt}(1)$. According to \eqref{eq:StringOptInp} and to \eqref{eq:String_OptSys}, it holds that \[u^{\opt}_0 = \frac{1}{2}\left(x_{0,2}^{\opt}(1) - x_{0,1}^{\opt}(1)\right) = \frac{1}{2}\left(-x_{0,1}^{\opt}(1) - x_{0,1}^{\opt}(1)\right) = -x_{0,1}^{\opt}(1),\] which confirms what is obtained by setting $E\&F\spvek{x^{\opt}_0}{u^{\opt}_0}$ to $0$. 
Let us now verify the link between the value functions. We have that $\mathrm{Val}_{J_w}(x_0) = \langle x_0,P_wx_0\rangle_\X$. Moreover, it holds that
\begin{equation*}
    J(u^{\opt}(\cdot),y^{\opt}(\cdot)) = -2\int_0^\infty \vert x_1^\opt(1,\tau)\vert^2\d\tau = \mathrm{Val}_J(x_0).
\end{equation*}
By looking at \eqref{eq:String_OptSys}, it can be shown that $x_1^{\opt}(1,t) = -\frac{1}{2}(x_{0,2}(t) - x_{0,1}(t))\mathbf{1}_{[0,1]}(t)$, where $x_0 := \spvek{x_{0,1}}{x_{0,2}}$ is the initial condition supplied to \eqref{eq:String_OptSys}. Consequently, we have that
\begin{align*}
    \mathrm{Val}_{J_w}(x_0) + \mathcal{S}(x_0) &= \frac{1}{2}\int_0^1 \vert x_{0,1}(\xi) + x_{0,2}(\xi))\vert^2\d \xi - \int_0^1 \vert x_{0,1}(\xi)\vert^2\d \xi - \int_0^1 \vert x_{0,2}(\xi)\vert^2\d \xi\\
    &= \int_0^1 -\frac{1}{2}\vert x_{0,1}(\xi)\vert^2-\frac{1}{2}\vert x_{0,2}(\xi)\vert^2 + \re(\,\overline{x_{0,1}(\xi)}x_{0,2}(\xi)\,)\d\xi\\
    &= -\frac{1}{2}\int_0^1\vert x_{0,1}(\xi)-x_{0,2}(\xi)\vert^2\d\xi\\
    &= -2\int_0^\infty \vert x_1^{\opt}(1,\tau)\vert^2\d\tau\\
    &= \mathrm{Val}_J(x_0),
\end{align*}
which confirms the statement of Proposition \ref{prop:ValueFct}.

\subsection{A boundary controlled heat equation}
    In this last example, we consider a heat equation on a bounded Lipschitz domain $\Omega\subset\R^d$, $d\ge1$.
    We consider the heat equation with Dirichlet boundary control and Dirichlet output, i.e.,
    \begin{equation*}
        \label{eq:Heat}
        \begin{aligned}
            \frac{\partial x}{\partial t}(\xi,t) &= \Delta x(\xi,t),& \xi\in\Omega,\\
            \frac{\partial x}{\partial n}(\xi,t) &= y(\xi,t),&\xi\in\partial\Omega,\\            x(\xi,t)&=u(\xi,t),&\xi\in\partial\Omega,
        \end{aligned}
    \end{equation*}
    where $\frac{\partial x}{\partial n}(\xi,t) = n(\xi)\cdot\nabla x(\xi,t)$ with $n$ being the outward normal to $\partial\Omega$. 
    It is straightforward to show that this constitutes a~system node with $\X=\L^2(\Omega)$, $\Y=\H^{1/2}(\partial\Omega)$, and $\U=\H^{-1/2}(\partial\Omega)$, where the latter is the {\em anti-dual} of $\H^{1/2}(\partial\Omega)$, i.e., the set of bounded conjugate-linear functionals on $\H^{1/2}(\partial\Omega)$. The system node is given by
    \begin{equation*}
    \label{eq:HeatSysNode}
    \begin{aligned}
       S\spvek{x_0}{u_0} &= \pmatsmall{A\& B\\[1mm]C\& D}\pvek{x_0}{u_0} = \spvek{\Delta x_0}{\operatorname{tr} x_0}\\
       \dom(S) &= \left\{\spvek{x_0}{u_0}\in\H^1(\Omega)\times \H^{-1/2}(\partial\Omega)\,\vert\,\nabla x_0\in\H_{\operatorname{div}}(\Omega;\C^d)\,\wedge\,\operatorname{tr}_n\nabla x_0 = u\right\},
    \end{aligned}
    \end{equation*}
where $\H_{\operatorname{div}}(\Omega;\C^d)$ is the space of all elements of $\L^2(\Omega;\C^d)$, whose weak divergence is in $\L^2(\Omega)$, and 
$\operatorname{tr}:\H^1(\Omega)\to \H^{1/2}(\partial\Omega)$, $\operatorname{tr}_n:\H_{\operatorname{div}}(\Omega;\C^d)\to \H^{-1/2}(\partial\Omega)$ is the normal trace operator. The proof that this defines a system node is analogous to the arguments in \cite[Sec.~4.1]{PhilippReisSchaller2025}, where Dirichlet control and Neumann observation are considered.

We consider the cost functional
    \begin{equation}
    J(u(\cdot),y(\cdot)) = 2\int_0^\infty 2\re\langle y(\tau),u(\tau)\rangle_{\H^{1/2},\H^{-1/2}}\d\tau,
    \label{eq:CostHeat}
    \end{equation}
where the latter denotes the duality product. Note that this corresponds to a~cost functional of the form \eqref{eq:CostIntro}, in which $Q=0$, $R=0$, and $S:\H^{-1/2}(\partial\Omega)\to \H^{1/2}(\partial\Omega)$ is the inverse of the Riesz isomorphism (which is linear due to the fact that $\H^{-1/2}(\partial\Omega)$ is the set of conjugate linear functionals).

Integration by parts gives
    \begin{multline}\label{eq:ImpPass:Heat}
        \re\langle A\& B\spvek{x_0}{u_0},x_0\rangle_\X = \re\int_\Omega \overline{x_0(\xi)}\Delta x_0(\xi)\d\xi\\ = \re\langle \operatorname{tr}x_0,\operatorname{tr}_n\nabla x_0\rangle_{\H^{1/2},\H^{-1/2}} - \Vert\nabla x_0\Vert^2_{\L^2}
= \re\langle \operatorname y_0,u_0\rangle_{\H^{1/2},\H^{-1/2}} - \Vert\nabla x_0\Vert^2_{\L^2}.    \end{multline}
    which shows that the system is dissipative with respect to the supply rate $s(y,u) = 2\re\langle y,u\rangle_{\H^{1/2},\H^{-1/2}}$ and storage function $\mathcal{S}(x_0) = -\Vert x_0\Vert^2_\X$. Consequently, the operator $P$ in \eqref{eq:RiccatiDissip} is given by $P = -I_\X$. Now we show that Assumption \ref{assum:StateFeedbackStab} is satisfied. We apply the negative output feedback $u(\cdot) = -\mathcal{R}\,\big( C\& D\spvek{x(\cdot)}{u(\cdot)}\big) + v(\cdot)$, where $\mathcal{R}$ is the Riesz isomorphism on $\H^{1/2}(\partial\Omega)$. Then \cite[Thm.~2.5]{HasPau2025} yields that this leads to an infinite-time well-posed system node whose semigroup generator is defined by
    \begin{equation*}
        \label{eq:AK_Heat}
        \begin{aligned}
            A^Kx_0 &= \Delta x_0,\\
            \dom(A^K) &= \left\{x_0\in\H^1(\Omega)\,\vert\,\nabla x_0\in \H_{\operatorname{div}}(\Omega;\C^d), \operatorname{tr}_n \nabla x_0 = -\operatorname{tr} x_0\right\}.
        \end{aligned}
    \end{equation*}
    In order to show that Assumption \ref{assum:StateFeedbackStab} is satisfied, it remains to show that $A^K$ generates a strongly stable $C_0$-semigroup. For this, observe that for $x_0\in\dom(A^K)$, we have
    \begin{equation}
        \langle A^Kx_0,x_0\rangle_{\L^2} = \|\operatorname{tr}x_0\|_{\H^{1/2}}^2 - \Vert\nabla x_0\Vert^2_{\L^2}.
        \label{eq:AK_IP}
    \end{equation}
    By the trace theorem \cite[Thm.~1.5.1.3]{Grisvard1985},
    there exists some $E\in\Lb(\H^{1/2}(\partial\Omega), \H^1(\Omega))$ with  $\operatorname{tr}E = I$. Let $x \in \H^1(\Omega)$. Using 
    \[
x = (x - E\,\operatorname{tr}x) + E\,\operatorname{tr}x,
\]
we have $x - E\,\operatorname{tr}x \in \H^1_0(\Omega)$. By the Poincaré inequality on $\H^1_0(\Omega)$, see \cite[Lem.~10.2]{Tartar2007}, and the boundedness of $E$, we obtain that there exists some $C>0$, such that
\[
\|x\|_{\L^2(\Omega)}^2
\le
C\big(\|\nabla x\|_{\L^2(\Omega)}^2 + \|\operatorname{tr}x\|_{\H^{1/2}(\partial\Omega)}^2\big)\quad\forall\,x \in \H^1(\Omega).
\]
Combining the latter with \eqref{eq:AK_IP}, we have
\[\langle A^Kx_0,x_0\rangle_{\L^2}\leq -C^{-1}\|x_0\|^2_{\L^2}\quad\forall\,x \in \dom(A^K),\]
which further implies that the semigroup generated by $A^K$ is exponentially stable. Altogether, this implies that Assumption \ref{assum:StateFeedbackStab} is satisfied. Now we turn our attention to the determination of the dissipation output $w(\cdot)$ and the Hilbert space $\W$. According to \eqref{eq:ImpPass:Heat}, it holds that
    \begin{align*}
        2\re\langle A\& B\spvek{x_0}{u_0},-I_\X x_0\rangle_\X + s(C\&D\spvek{x_0}{u_0},u_0) = 2\Vert\nabla x_0\Vert_\W^2,
    \end{align*}
    which entails that $K\& L\spvek{x_0}{u_0} = \sqrt{2}\nabla x_0$ and $\W = \L^2(\Omega;\C^d)$. This output is due to internal dissipativity of the heat equation and it highlights that, despite the fact that the input and the output are located at the boundary, the dissipative output can lie in an infinite-dimensional space.

According to Theorem \ref{thm:ReExprCost}, minimizing the cost \eqref{eq:CostHeat} is equivalent in minimizing 
\begin{equation*}
    J_w(u(\cdot),w(\cdot)) = \int_0^\infty \Vert w(\cdot,\tau)\Vert_{\L^2}^2\d\tau,
\end{equation*}
where $w(\cdot) = K\& L\spvek{x(\cdot)}{u(\cdot)} = \sqrt{2}\nabla x(\cdot)$. According to Proposition \ref{prop:ValueFct}, the link between the corresponding value functions is given by $\mathrm{Val}_J(x_0) + \Vert x_0\Vert_\X^2 = \mathrm{Val}_{J_w}(x_0) = \langle x_0,P_wx_0\rangle_\X$ for some $P_w\in\Lb(\X)$. Moreover, Theorem \ref{thm:MainThm} yields the existence of a Hilbert space $\W_w$ and an operator $E\& F:\Lb(\dom(A\& B),\W_w)$ such that the following operator-node Lur'e equation is satisfied 
\begin{equation}
    2\re \langle P_wx_0,A\& B\spvek{x_0}{u_0}\rangle_\X + s(C\& D\spvek{x_0}{u_0},u_0) = \Vert E\& F\spvek{x_0}{u_0}\Vert_{\W_w}^2 + 2\re\langle x_0,A\& B\spvek{x_0}{u_0}\rangle_\X,
    \label{eq:RiccatiHeat}
\end{equation}
for all $\spvek{x_0}{u_0}\in\dom(A\& B)$. By noting that $2\re\langle x_0,A\& B\spvek{x_0}{u_0}\rangle_\X = s(C\& D\spvek{x_0}{u_0},u_0) - \Vert K\& L\spvek{x_0}{u_0}\Vert^2_{\W}$, the Lur'e equation \eqref{eq:RiccatiHeat} reduces to 
\begin{align*}
    2\re\langle P_wx_0,A\& B\spvek{x_0}{u_0}\rangle_\X = \Vert E\& F\spvek{x_0}{u_0}\Vert_{\W_w}^2 - \Vert K\& L\spvek{x_0}{u_0}\Vert_\W^2.
\end{align*}

\section*{Acknowledgments}
Anthony Hastir is supported by the FRS-FNRS (Belgium), under the grant CR
40010909. Timo Reis gratefully acknowledges funding by the Deutsche Forschungsgemeinschaft (DFG,
German Research Foundation) – Project-ID 531152215 – CRC 1701.

\printbibliography

\end{document}